\documentclass[pdftex,a4paper,12pt]{amsart}

\usepackage{geometry}

\usepackage[cp1250]{inputenc}

\usepackage{alltt}
\usepackage{euler}
\usepackage{amsfonts}
\usepackage{amscd}
\usepackage{graphicx}
\usepackage{lpic}
\usepackage{longtable}
\usepackage{url}
\usepackage{import}
\usepackage{amssymb, amsthm, amsmath}
\usepackage[sc]{mathpazo}
\usepackage{enumitem, textcomp, setspace}
\usepackage{nicefrac}
\usepackage{varwidth}

\usepackage{tikz}
\usepackage{tikz-cd}
\usetikzlibrary{bbox}
\usetikzlibrary{intersections}
\usepackage{pgfplots}
\usepgfplotslibrary{external}

\allowdisplaybreaks

\numberwithin{equation}{section}

\theoremstyle{plain}
\newtheorem{theorem}{Theorem}[section]

\theoremstyle{definition}

\newtheorem{remark}[theorem]{Remark}

\usepackage[colorlinks, pagebackref=false]{hyperref}

\usepackage[all]{hypcap}

\definecolor{internalLink}{rgb}{0,0,0.5}
\definecolor{citeLink}{rgb}{0,0.5,0}
\definecolor{urlLink}{rgb}{0,0.5,0.5}

\hypersetup{
	linkcolor=internalLink,
	citecolor=citeLink,
	urlcolor=urlLink
}

\title[A polynomial pair invariant of alternating knots and links]{A polynomial pair invariant of alternating knots\\ and links}

\author{Micha{\l} Jab{\l}onowski}

\address{Institute of Mathematics, Faculty of Mathematics, Physics and Informatics,\newline University of Gda\'nsk, 80-308 Gda\'nsk, Poland}

\keywords{knot invariant, link invariant, alternating knot, WRP invariant, knots tabulation}

\subjclass[2020]{57K10 (primary), 57K14 (secondary)}

\email{michal.jablonowski@gmail.com}

\date{\today}

\begin{document}

\maketitle

\begin{abstract}
	
We introduce an invariant of alternating knots and links (called here WRP), namely a pair of integer polynomials associated with their two checkerboard planar graphs from their minimal diagram. We prove that the invariant is well-defined and give its values obtained from calculations for some knots in the tables. This invariant is strong enough to distinguish all knots in the tables with up to 10 crossings (including their mirror images). We compare the strength of the new invariant with classical invariants, including the three-variable Kauffman bracket.

\end{abstract}
\vspace{-0.3cm}
\section{Introduction}

We introduce an invariant of alternating knots and links, called  $WRP$, namely a set of an unordered pair of two variable ($w, r$) integer polynomials associated with their two checkerboard planar graphs from their reduced diagram. We prove that the invariant is well-defined and give its values obtained from calculations for some knots in the tables. This invariant is strong enough to distinguish all knots in the tables with up to $10$ crossings (including their mirror images in the sense of the negative amphicheirality).
\par
In this paper, Section\;\ref{s1} contains necessary definitions and examples of $WRP$ calculations, in Section\;\ref{s2} we give proof of $WRP$ invariance. In Section\;\ref{s3} of this paper, we show computationally generated tables of knots and links with their $WRP$ invariant, as well as a comparison to other invariants on examples.

\section{Definitions}\label{s1}

A \emph{reduced diagram} $D$ of a (non-trivial) knot or link $K$ is a generic projection of the knot or link in the plane, a regular $4$-valent graph, with extra information at each vertex indicating which arc of the link passes over the other, we assume moreover, that $D$ has no nugatory crossings, the number of crossings cannot be reduced by an immediate Reidemeister I move, and it cannot be separated by any circle in the sphere.
\par 
By a \emph{region} of a reduced diagram $D$ we mean a face of the underlying projection of the link. The diagram can be checkerboard-shaded so that every edge separates a shaded region (black region) from an unshaded one (white region). 

\subsection{Graphs}

We consider two associated with $D$ graphs (also known as Tait graphs) the shaded checkerboard graph $G_B$ and the unshaded checkerboard graph $G_W$. The graph $G_B$ has vertices corresponding to the black region regions of $D$, and an edge between two vertices for every crossing at which the corresponding regions meet. The dual graph $G_W$ is defined by analogy but the vertices correspond to the white regions.
\par
On each edge $e$ of graph $G_B$ and $G_W$ we assign its weight $weight(e)$ as a variable $w$ when the corresponding crossing was positive and variable $r$ if it was negative crossing.
\par 
An unoriented graph $G_B^*$ is then obtained from an unoriented graph $G_B$ by consolidating all multiple edges between the same pair of vertices to a single edge that has its weight equal to the product of weights all edges being consolidated. The same goes with defining graph $G_W^*$ from $G_W$.
\par
An oriented graph $G_B'$ is then obtained from an unoriented graph $G_B^*$ by doubling and weighting (by the same weight) its edges and putting opposite orientations to the resulting edges. The same goes with defining graph $G_W'$ from $G_W^*$.

\subsection{Invariant}

We define now a function\\ $WRP:\text{Reduced non-split link diagrams}\to\mathbb{Z}[w, r]$, as an unordered set:

$$WRP(D)=\left\{\sum_{C_B}\prod_{e}{weight(e)},\;\;\sum_{C_W}\prod_{e}{weight(e)}\right\},$$

where the sum is taken over all oriented cycles $C_B$ (resp $C_W$) in a graph $G_B'$ of the diagram $D$ (resp. $G_W'$) and the product is taken over all edges in a given cycle.

\begin{theorem}\label{twA}
	For any pair $D_1$ and $D_2$ of reduced alternating diagrams of a given alternating link, we have $$WRP(D_1)=WRP(D_2).$$
\end{theorem}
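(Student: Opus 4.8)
The plan is to reduce the statement to invariance under a single \emph{flype} and then read off the effect of a flype on the two checkerboard graphs. Two preliminary observations dispose of the trivial ambiguities: first, $WRP$ is defined purely from the abstract (signed, weighted) graphs $G_B$ and $G_W$ and their oriented cycles, so it is insensitive to planar isotopy and to the choice of outer region; second, the two available checkerboard shadings merely interchange $G_B$ and $G_W$, which is harmless because $WRP(D)$ is an \emph{unordered} pair. With this understood, I would invoke the resolution of the Tait flyping conjecture (the Menasco--Thistlethwaite theorem): any two reduced alternating diagrams $D_1,D_2$ of a given alternating link are related by a finite sequence of flypes. Hence it suffices to prove $WRP(D)=WRP(D')$ whenever $D'$ is obtained from $D$ by one flype.

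Before analyzing the flype, I would record exactly how $WRP$ depends on the consolidated graphs. In $G_B'$ every edge $e=uv$ of $G_B^*$ becomes an oppositely oriented pair, so a simple directed cycle in $G_B'$ is either the length-two cycle $u\to v\to u$ (one per edge, of weight $weight(e)^2$) or the two orientations of an undirected simple cycle of $G_B^*$ of length at least three (each of weight $\prod_{e}weight(e)$). Thus
\[
\sum_{C_B}\prod_{e}weight(e)\;=\;\sum_{e}weight(e)^2\;+\;2\sum_{\gamma}\prod_{e\in\gamma}weight(e),
\]
where $\gamma$ ranges over the undirected cycles of $G_B^*$ of length $\ge 3$, and similarly for $G_W^*$. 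Consequently each polynomial in $WRP(D)$ is determined by the family of cycles of the relevant consolidated graph together with its edge weights, that is, by the weighted cycle matroids of $G_B^*$ and $G_W^*$.

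Next I would translate a flype into a graph move. A flype rotates a tangle $T$ by a half-turn and carries one adjacent crossing to the opposite side of $T$. Since a planar half-turn is orientation preserving, it alters neither the over/under information nor the sign/type of any crossing, so all edge weights are preserved. The tangle is attached to the rest of the diagram only through the regions along which its strands exit, which in each checkerboard graph constitutes a two-vertex cut; the half-turn therefore acts on $G_B$ (and dually on $G_W$) as a Whitney-type twist at this cut, together with a relocation of the edge of the flyped crossing, which joins the two cut vertices both before and after the move. A Whitney twist preserves the cycle matroid and the relocated edge retains its weight and its endpoints, so after re-consolidating parallel edges the weighted cycle matroids of $G_B^*$ and $G_W^*$ are unchanged, up to a possible interchange of the two graphs. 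By the displayed formula the two cycle-sum polynomials are preserved, and the interchange is again absorbed by the unorderedness of $WRP$.

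The main obstacle, and the step demanding the most care, is precisely this translation. I expect the real work to lie in verifying that the flype's action on each \emph{consolidated} graph is genuinely a twist at a two-vertex cut, that the edge of the moved crossing is correctly accounted for on both the $G_B$ and $G_W$ sides, and, most delicately, that the consolidation of parallel edges---the collapsing of twist regions to single weighted edges---commutes with the twist, so that no weights are created, destroyed, or merged incorrectly. One must also confirm that the two-vertex cut seen in $G_B$ corresponds under planar duality to the appropriate cut structure in $G_W$, so that the argument applies simultaneously to both members of the pair.
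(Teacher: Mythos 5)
Your overall strategy coincides with the paper's: invoke the Menasco--Thistlethwaite flyping theorem to reduce to a single flype, note that a flype preserves the signs of all crossings, and then verify that the cycle structure of each checkerboard graph is unchanged. Your repackaging is a nice clarification: the identity
\[
\sum_{C}\prod_{e}weight(e)=\sum_{e}weight(e)^2+2\sum_{\gamma}\prod_{e\in\gamma}weight(e)
\]
(sum over edges of $G^*$ plus twice the sum over undirected simple cycles of length at least $3$) shows that each polynomial depends only on the weighted cycle matroid of the consolidated graph, and a Whitney twist preserves that matroid; the paper instead exhibits the weight-preserving bijection of oriented cycles directly, and its Case II (replace the tangle's portion of a cycle by the oppositely oriented twin edges) is exactly the cycle-level shadow of your twist. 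Consolidation is unproblematic because $2$-circuits, hence parallel classes, are matroid data.

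There is, however, one concrete misstatement, and it sits exactly where the paper is forced into its two-case analysis. You claim that in \emph{each} checkerboard graph the edge $e_c$ of the flyped crossing ``joins the two cut vertices both before and after the move.'' Since the four regions at $c$ alternate in colour, this is true in only one of the two graphs, namely the one whose cut vertices are the regions above and below the flype band: there $e_c$ joins them before and after, and the move is a plain Whitney twist of the tangle's subgraph. In the other graph, $e_c$ joins the outer region on the near side of $c$ to the small region pinched between $c$ and the tangle $R$ (the ``unlabeled dot'' of the paper's figures); after the flype that pinched region lies on the far side of $R$, so the endpoints of $e_c$ genuinely change and neither endpoint pair is the attachment cut of $R$'s subgraph. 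The repair is to enlarge the twisted piece: take $R$'s subgraph together with the pinched vertex and the edge $e_c$, attached to the rest of the graph at the outer left and right regions; the flype reverses this enlarged subgraph end for end, which is again a Whitney twist at a two-vertex cut, and your matroid argument then applies. You flagged this translation as the step demanding care, and rightly so; with the correction above the proof closes, and it is in substance the same proof as the paper's.
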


Therefore, for any non-trivial alternating knot or link $K$, we define $WRP(K)$ as $WRP(D)$ of any of reduced diagram $D$ representing the knot of link $K$.

\begin{remark}
	Similar ideas were considered in \cite{DiaPha21} but there was used different graphs in their constructions and the invariants defined there (combined) distinguish "all alternating knots up to $9$ crossings" (as it is stated there).
	\par
	The idea of using a pair of oppositely oriented weighted edges came to use after reading \cite{SilWil19}, in that paper they are counting trees instead of cycles, using a special diagram instead of an alternating diagram, and they recover the well-known Alexander polynomial from their construction.
\end{remark}

\subsection{Calculations}

For the (right) trefoil knot $K3a1$ we have calculations show in Figure \ref{rys1n}, that is $WRP(K3a1)=\{w^{6},\;2w^3+3w^2\}$.

\begin{figure}[h!t]
	\begin{center}
\includegraphics[width=14.5cm]{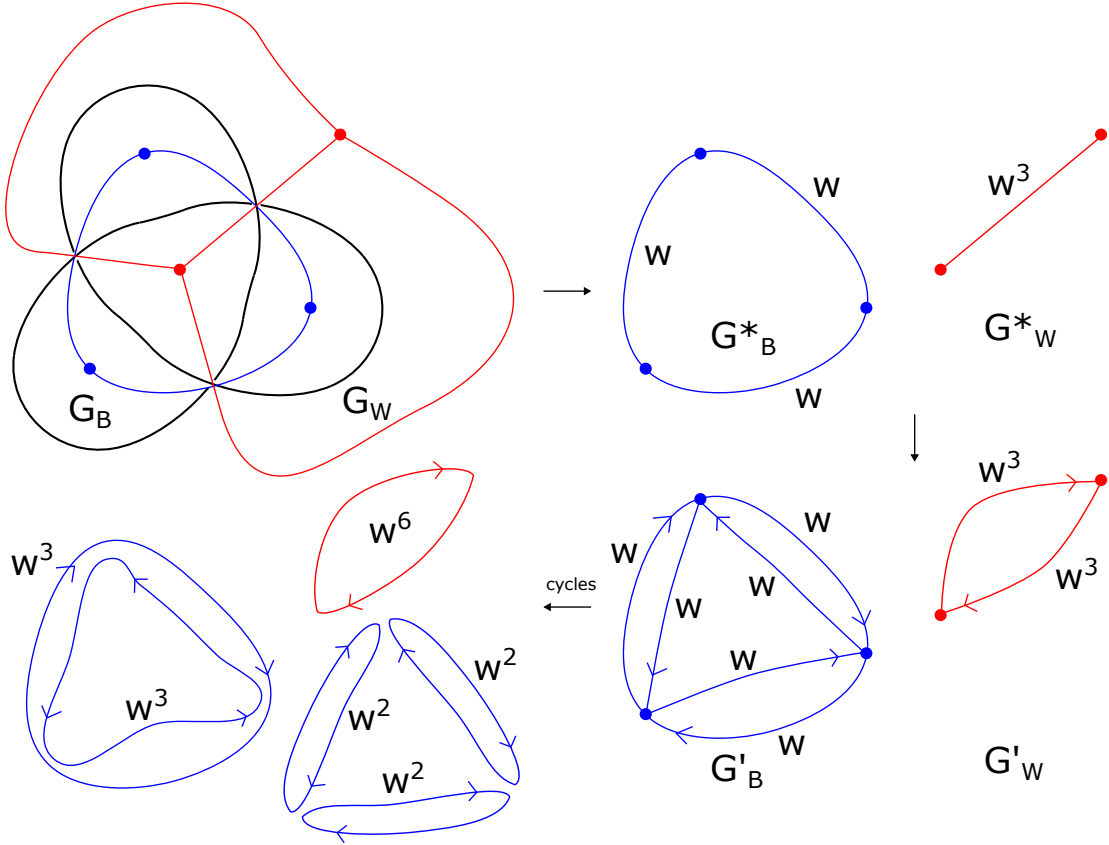}
		\caption{Calculating $WRP$ of a trefoil.\label{rys1n}}
	\end{center}
\end{figure}

We can deduce from this a generalized formula for all (positive) torus $T(2, k)$ links (for $k\geq 2$) as follows.
$$WRP(T(2,k))=\{w^{2k},\;2w^k+kw^2\}, \;\;WRP(mirror(T(2,k)))=\{r^{2k},\;2r^k+kr^2\}.$$

It can also be easily derived for all twist knots $T_{k}$ as follows.
$$WRP(T_k)=\{2w^k+w^4+(k-2)w^2,\; w^{2k-4}+2w^k+2w^2\},$$  $$WRP(mirror(T_k))=\{2r^k+r^4+(k-2)r^2,\; r^{2k-4}+2r^k+2r^2\}.$$

\section{Invariance}\label{s2}

\begin{proof}[Proof of Theorem\;\ref{twA}]

The well-known Tait's Flype Conjecture states that if $D_1$ and $D_2$ are reduced alternating diagrams, then $D_2$ can be obtained from $D_1$ by a sequence of flypes (see Figure \ref{rys2n}) if and only if $D_1$ and $D_2$ represent the same link. This conjecture has been proven true by Menasco and Thistlethwaite \cite{MenThi93}. 

\begin{figure}[h!t]
	\begin{center}
		\includegraphics[width=8cm]{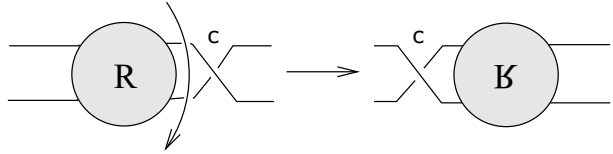}
		\caption{A flype on the tangle $R$ moves the crossing $c$ from one side of $R$ to the other, while turning $R$ by $180^\circ$ around the horizontal axis.\label{rys2n}}
	\end{center}
\end{figure}

To prove $WRP$ invariants it is sufficient now to show that the flype operation does not change $WRP$ value. First, note that there is a one-to-one correspondence between crossings before and after the move, and each corresponding crossing preserves its sign.
\par
Let us fix the reduced alternating diagram before the flype as $D_1$ and after the flype as $D_2$. We can now fix the graph $G_1$ as $G'_W$ for a diagram $D_1$ (the proof for $G_B$ will proceed analogously) and the graph $G_2$ as $G'_W$ for a diagram $D_2$.
\par
It sufficient now to prove that $$\sum_{C_1}\prod_{e}{weight(e)} = \sum_{C_2}\prod_{e}{weight(e)},$$ where $C_1$ are cycles from $G_1$ and $C_2$ are cycles from $G_2$.
\par
The general situation for $G'_W$ has to be one of two cases shown in Figures \ref{rys3}--\ref{rys4} ($D_1$ is on the left $D_2$ is on the right), where two white regions are distinguished and correspond to vertices $v_1$ and $v_2$. 
\par
We can easily conclude that if any cycle from $G_1$ does not go through both $v_1$ and $v_2$ then there is a one-to-one correspondence between cycles in $G_1$ and cycles in $G_2$ and each corresponding cycle preserves its number of edges and their weights. It is because the cycles are either outside the shown flype tangle (so they do not change) or they are completely inside tangle $R$ (plus eventually touching the unlabeled dot in the picture) so they just reverse cycle orientation and this is not considered in the final sum.
\par 
When a cycle in $G_1$ passes through both  $v_1$ and $v_2$ then there is a corresponding cycle in $G_2$ that in Case I: preserves its number, its order, and their weight of edges involved in the cycle, in Case II: for all edges between $v_1$ and $v_2$ we could pick the other twin edge with the opposite orientation, such that the resulting cycle is coherently oriented, the weights are preserved but the order may vary so this does not change the product od weights in the cycle.

\begin{figure}[h!t]
	\begin{center}
		\includegraphics[width=13cm]{rys4.pdf}
		\caption{Case I.\label{rys4}}
	\end{center}
	\end{figure}

\begin{figure}[h!t]
	\begin{center}
		\includegraphics[width=7cm]{rys3.pdf}
		\caption{Case II.\label{rys3}}
	\end{center}
\end{figure}

\end{proof}


\section{Tabulation}\label{s3}

In this section, we computationally generate the following Table\;\ref{table1} of the $WRP$ invariant of prime knots up to the crossing number equal to $10$.

\par
We consider knots (and their names) that may be different in other tables up to the mirror image. Invariant $WRP$ of a given knot and $WRP$ of its mirror image differ by exchanging variables $w$ and $r$. As our computations show $WRP$ invariant is strong enough to distinguish all knots in the prime knot tables with up to $10$ crossings including their mirror images (in the sense of the negative amphicheirality). It is not a complete invariant, by the similar argument as in the Proof of Theorem\;\ref{twA}, $WRP$ cannot distinguish between a mutant pair, like knots $K11a19$ and $K11a25$ for example.
\par 
The values of the invariant for knots in the knot tables up to $13$ crossings can be found in the \texttt{tableWRP13.txt} file in the \texttt{LaTeX} source file of this paper's \texttt{arXiv} version.

\subsection{Comparisons to other knot invariants}
\ \\

We provide examples to compare the strength of $WRP$ with other knot invariants. For instance, we know that the alternating knots $K10a28$ and $K10a61$ have the same HOMFLY-PT polynomial $P$ (see \cite{HOMFLY85}, \cite{PrzTra88}) and the same signature (hence the same Khovanov and Knot Floer homologies), but $WRP(K10a28)\not=WRP(K10a61)$. In addition, we find that\\ $WRP(K10a10)\not=WRP(\text{mirror}(K10a10))$, and knots $K10a10$ and its mirror knot are indistinguishable by HOMFLY-PT polynomial, signature, and hyperbolic volume (\cite{LivMoo23}).
\par
For the two-variable Kauffman polynomial $F(a, z)$ (see \cite{Kau90}) we have, for example, $F(K11a30)=F(\text{mirror}(K11a189))$, but\\  $WRP(K11a30)\not=WRP(\text{mirror}(K11a189))$.\\ 

The same knot pair are example for the three-variable Kauffman bracket $T_L(A, B, d)$ (see \cite{Kau89}) we have, for example, $T_L(K11a30)=T_L(\text{mirror}(K11a189))=$\\
\ \\
$149A^5B^6 + 293A^6B^5d + 216A^4B^7d +
274A^7B^4d^2 + 279A^5B^6d^2 + 142A^3B^8d^2 + 154A^8B^3d^3 +165A^6B^5d^3 + 111A^4B^7d^3 + 53A^2B^9d^3 + 54A^9B^2d^4 +
56A^7B^4d^4 + 34A^5B^6d^4 + 23A^3B^8d^4 + 11AB^{10}d^4 +
11A^{10}Bd^5 + 11A^8B^3d^5 + 4A^6B^5d^5 + \\3A^4B^7d^5 +
2A^2B^9d^5 + B^{11}d^5 + A^{11}d^6 + A^9B^2d^6$, but
\ \\

$WRP(K11a30)\not=WRP(\text{mirror}(K11a189))$.\\
$WRP(K11a30) = \{2r^4w^4 + 2r^3w^4 + 2r^4w^2 + 6r^2w^4 + 2r^3w^2 + 2rw^4 + r^4 + 2r^2w^2 + 2w^4 + 2r^3 + 2rw^2 + 3r^2 + 6w^2,
2r^5w^3 + 4r^3w^4 + 2r^5w + 4r^3w^2 + 4r^2w^3 + r^4 + 7w^4 + 2r^2w + 3r^2 + 4w^2\}$ and\\ $WRP(mirror(K11a189)) = \{4r^3w^4 + 2r^2w^5 + 2r^3w^3 + 4r^3w^2 + 4r^2w^3 + r^4 + 2r^3w + 2r^2w^2 + 3w^4 + 2r^2w + 3r^2 + 4w^2,
2r^3w^4 + 2r^2w^5 + 4r^4w^2 + 4r^3w^3 + 2rw^5 + 2r^5 + 4r^2w^3 + w^4 + 4rw^2 + 5r^2 + 4w^2\}$

\ \\

\par
For the case where the other invariants are stronger, we have\\ $F(K11a75)\not=F(K11a102)$ and $P(K11a75)\not=P(K11a102)$,\\ but $WRP(K11a75)=WRP(K11a102)=\{2r^4w^4 + 2r^2w^6 + 2r^2w^4 + w^6 + 2r^4 + 4w^2, 2r^2w^6 + 2r^4w^3 + 2w^7 + 2r^2w^4 + w^6 + 2r^2w^3 + 2r^2w + 4r^2 + 4w^2\}$.\\ The diagrams of these (non-mutant) knots are shown in Figure \ref{rys5}.
\ \\

\begin{figure}[h!t]
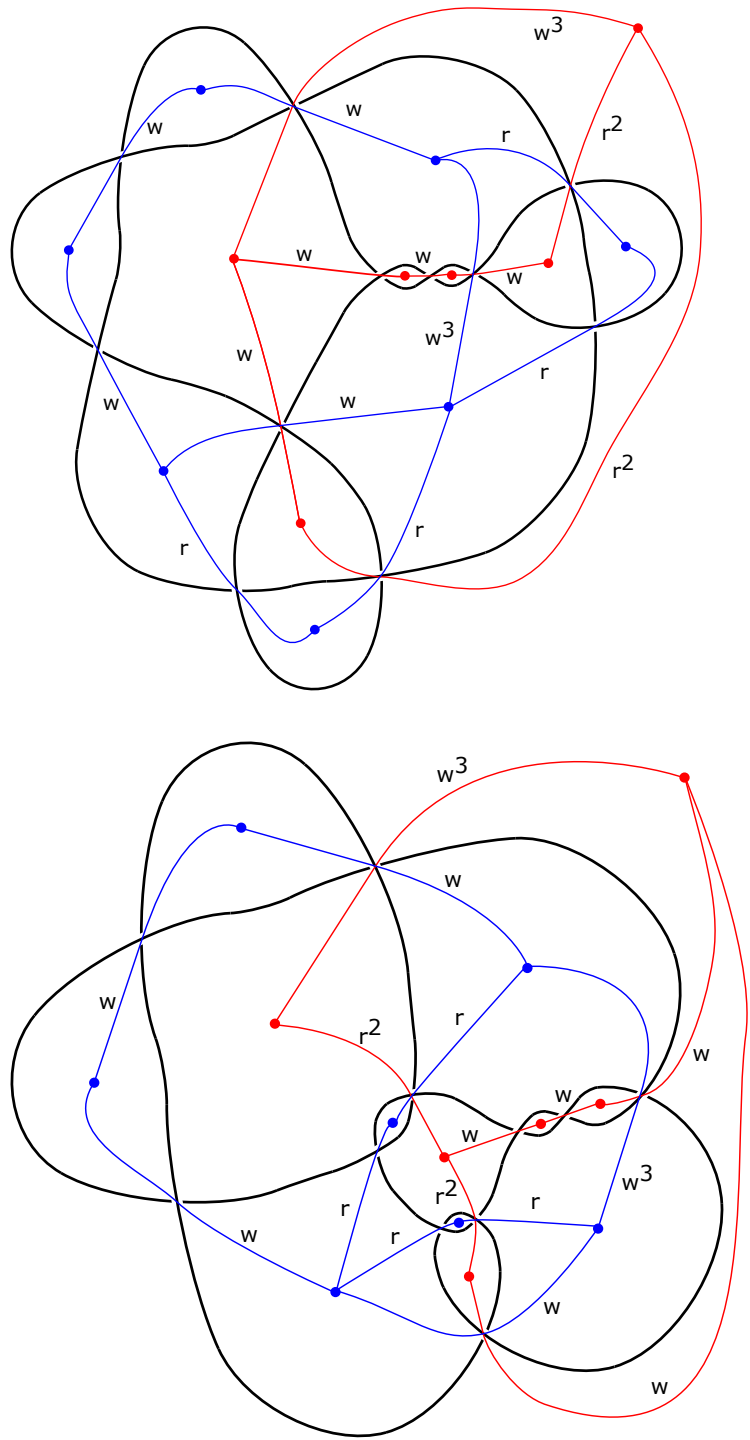

	\begin{center}
		\includegraphics[width=9.7cm]{rys75.pdf}
		\includegraphics[width=9.7cm]{rys102.pdf}
		\caption{Knots $K11a75$ and $K11a102$ with their graphs $G_B'$ and $G_W'$.\label{rys5}}
	\end{center}
\end{figure}
\par 
As Table \ref{table2} shows $WRP$ can be considered better invariant on average. In this table we have considered all prime alternating knots up to a crossing number of $13$, without mirror images, in $6729$ total, we checked the uniqueness of an invariant of a given knot from any other knot from this set and any mirror image of any other knot in the set.

	\begin{center}
		\renewcommand{\arraystretch}{1.25}
		\begin{scriptsize}
		\begin{table}[ht]
			\caption{Occurrence of knots that are not distinguished by invariants out of 6729 total.\label{table2}}

		\begin{tabular}{l|c|r}
			
			invariant	&  knots with non-unique value & \%\\
			\hline\hline
			signature&6728&99.99\%\\
			\hline
			Alexander polynomial&4143&61.57\%\\
			\hline
			Jones polynomial&3025&44.95\% \\
			\hline
			HOMFLY-PT polynomial&1646&24.46\%\\
			\hline
			Kauffman $3$-variable bracket&810&12.04\% \\
			\hline
			Kauffman $2$-variable polynomial&755&11.22\% \\
			\hline
			{\bf WRP}&{\bf 705}&{\bf 10.48\%} 
			\\
			\hline

		\end{tabular}
			\end{table}
		\end{scriptsize}
	\end{center}

\section*{Acknowledgements}

This research was funded in whole or in part by NCN 2023/07/X/ST1/00157. For the purpose of Open Access, the author has applied a CC-BY public copyright licence to any Author Accepted Manuscript (AAM) version arising from this submission.

\newpage

\vspace*{0.2cm}
\begin{scriptsize}
	\renewcommand{\arraystretch}{1.25}
	\begin{center}
		\begin{longtable}[ht]{r|r||l}
			\caption{Knots and their $WRP$ polynomials. \label{table1}}\\
			DT-name & R-name 	&  $WRP$ polynomials\\
			\hline
			\endfirsthead
			\multicolumn{3}{c}
			{\tablename\ \thetable\ -- \textit{Continued from previous page}} \\
			DT-name & R-name 	& $WRP$ polynomials\\
			\hline
			\endhead
			\hline \multicolumn{3}{r}{\textit{Continued on next page}} \\
			\endfoot
			\hline
			\endlastfoot

$K3a1$& $3_{1}$ & $\{w^6, 2w^3 + 3w^2\}$\\

$K4a1$& $4_{1}$ & $\{r^4 + 2r^2w^2 + 2w^2, 2r^2w^2 + w^4 + 2r^2\}$\\

$K5a1$& $5_{2}$ & $\{2w^5 + w^4 + 3w^2, w^6 + 2w^5 + 2w^2\}$\\

$K5a2$& $5_{1}$ & $\{w^{10}, 2w^5 + 5w^2\}$\\

$K6a1$& $6_{3}$ & $\{2r^2w^3 + 2rw^3 + w^4 + 2r^3 + 3r^2 + w^2, 2r^3w^2 + r^4 + 2r^3w + 2w^3 + r^2 + 3w^2\}$\\

$K6a2$& $6_{2}$ & $\{2r^2w^4 + w^6 + r^4 + w^2, 2r^2w^3 + 2w^4 + 2r^2w + 2r^2 + 4w^2\}$\\

$K6a3$& $6_{1}$ & $\{2r^2w^4 + r^4 + 4w^2, w^8 + 2r^2w^4 + 2r^2\}$\\

$K7a1$& $7_{7}$ & $\{2r^4w^2 + 2r^4 + 4r^2w^2 + 3w^2, 2r^4w + 4r^2w^2 + 4r^2w + 2w^3 + 4r^2 + 3w^2\}$\\

$K7a2$& $7_{6}$ & $\{2r^2w^3 + 2w^5 + r^4 + 2r^2w^2 + w^4 + 3w^2, 2r^2w^4 + 2r^2w^3 + w^4 + 2w^3 + 2r^2 + 3w^2\}$\\

$K7a3$& $7_{5}$ & $\{2w^7 + w^6 + w^4 + 2w^2, 4w^5 + 3w^4 + 5w^2\}$\\

$K7a4$& $7_{2}$ & $\{2w^7 + w^4 + 5w^2, w^{10} + 2w^7 + 2w^2\}$\\

$K7a5$& $7_{3}$ & $\{w^8 + 2w^7 + 3w^2, 2w^7 + w^6 + 4w^2\}$\\

$K7a6$& $7_{4}$ & $\{2w^6 + 4w^4 + 7w^2, 2w^7 + 2w^6 + w^2\}$\\

$K7a7$& $7_{1}$ & $\{w^{14}, 2w^7 + 7w^2\}$\\

$K8a1$& $8_{14}$ & $\{2r^2w^5 + 2w^5 + r^4 + 2r^2w^2 + w^4 + 4w^2,$\\&  &$ 4r^2w^3 + 7w^4 + 2r^2w + 2r^2 + 4w^2\}$\\

$K8a2$& $8_{15}$ & $\{2w^6 + 4w^5 + 2w^4 + 4w^2,$\\&  &$ 2w^6 + 4w^5 + 3w^4 + 4w^3 + 6w^2\}$\\

$K8a3$& $8_{10}$ & $\{2r^3w^3 + 2r^5 + 2r^2w^3 + w^4 + 5r^2 + w^2,$\\&  &$ 2r^5w^2 + r^6 + 2r^5w + r^4 + 2w^3 + 3w^2\}$\\

$K8a4$& $8_{8}$ & $\{r^6 + 2r^3w^3 + 2r^5 + 2r^2w^3 + w^4 + 2r^2 + w^2,$\\&  &$ 2r^5w^2 + 2r^5w + r^4 + 2w^3 + 3r^2 + 3w^2\}$\\

$K8a5$& $8_{12}$ & $\{2r^4w^2 + 2r^2w^4 + 2r^4 + 2r^2w^2 + 4w^2,$\\&  &$ 2r^4w^2 + 2r^2w^4 + 2r^2w^2 + 2w^4 + 4r^2\}$\\

$K8a6$& $8_{7}$ & $\{2r^4w^3 + 2r^5 + 2rw^3 + w^4 + 5r^2 + w^2,$\\&  &$ r^8 + 2r^5w^2 + 2r^5w + 2w^3 + r^2 + 3w^2\}$\\

$K8a7$& $8_{13}$ & $\{2r^4w^3 + r^6 + 2r^5 + 2rw^3 + w^4 + 2r^2 + w^2,$\\&  &$ 2r^4w^2 + 2r^4w + 2r^4 + 2r^2w^2 + 2r^2w + 2w^3 + 5r^2 + 3w^2\}$\\

$K8a8$& $8_{2}$ & $\{w^{10} + 2r^2w^6 + r^4 + w^2,$\\&  &$ 2r^2w^5 + 2w^6 + 2r^2w + 2r^2 + 6w^2\}$\\

$K8a9$& $8_{11}$ & $\{2r^2w^4 + 2r^2w^3 + 2w^5 + r^4 + w^4 + 4w^2,$\\&  &$ 2r^2w^5 + 3w^6 + 2r^2w + 2r^2 + 3w^2\}$\\

$K8a10$& $8_{6}$ & $\{2r^2w^6 + w^6 + r^4 + 3w^2,$\\&  &$ 3w^6 + 4r^2w^3 + 2r^2 + 3w^2\}$\\

$K8a11$& $8_{1}$ & $\{2r^2w^6 + r^4 + 6w^2,$\\&  &$ w^{12} + 2r^2w^6 + 2r^2\}$\\

$K8a12$& $8_{18}$ & $\{8r^3w^2 + 2r^4 + 8r^2w^2 + 8rw^2 + 4r^2 + 4w^2,$\\&  &$ 8r^2w^3 + 8r^2w^2 + 2w^4 + 8r^2w + 4r^2 + 4w^2\}$\\

$K8a13$& $8_{5}$ & $\{2r^2w^6 + 2w^6 + r^4,$\\&  &$ 2w^6 + 4r^2w^3 + 2r^2 + 6w^2\}$\\

$K8a14$& $8_{17}$ & $\{2r^3w^3 + 2r^3w^2 + 4r^2w^3 + 2r^4 + 2rw^3 + w^4 + 2rw^2 + 4r^2 + 2w^2,$\\&  &$ 2r^3w^3 + 4r^3w^2 + 2r^2w^3 + r^4 + 2r^3w + 2w^4 + 2r^2w + 2r^2 + 4w^2\}$\\

$K8a15$& $8_{16}$ & $\{2r^4w^2 + 2r^5 + 4r^3w^2 + 4r^2w^2 + 2rw^2 + 5r^2 + 3w^2,$\\&  &$ 2r^4w^2 + 2r^4w + 4r^3w^2 + 2r^4 + 4r^3w + 2w^3 + r^2 + 3w^2\}$\\

$K8a16$& $8_{9}$ & $\{2r^3w^4 + w^6 + 2rw^4 + 2r^4 + 4r^2 + w^2,$\\&  &$ 2r^4w^3 + r^6 + 2r^4w + 2w^4 + r^2 + 4w^2\}$\\

$K8a17$& $8_{4}$ & $\{2r^3w^4 + 2rw^4 + 2r^4 + 4r^2 + 4w^2,$\\&  &$ 2r^4w^4 + w^8 + r^6 + r^2\}$\\

$K8a18$& $8_{3}$ & $\{r^8 + 2r^4w^4 + 4w^2,$\\&  &$ 2r^4w^4 + w^8 + 4r^2\}$\\

$K9a1$& $9_{30}$ & $\{2r^4w^3 + 2r^3w^3 + 2r^2w^3 + r^4 + 2r^2w^2 + 2rw^3 + w^4 + 2r^3 + 3r^2 + 2w^2,$\\&  &$ 2r^5w + 4r^3w^2 + 2r^2w^3 + r^4 + 2w^4 + 2r^2w + 3r^2 + 4w^2\}$\\

$K9a2$& $9_{22}$ & $\{2r^4w^4 + 2r^2w^4 + w^6 + 2r^4 + 2r^2w^2 + 2w^2,$\\&  &$ 2r^2w^4 + 2r^4w + 2r^2w^3 + 2w^5 + 2r^2w^2 + 2r^2w + 4r^2 + 5w^2\}$\\

$K9a3$& $9_{19}$ & $\{2r^4w^4 + 2r^2w^4 + 2r^4 + 2r^2w^2 + 5w^2,$\\&  &$ 2r^2w^4 + w^6 + 2r^4w + 2r^2w^3 + 2w^5 + 2r^2w^2 + 2r^2w + 4r^2 + 2w^2\}$\\

$K9a4$& $9_{25}$ & $\{2r^2w^5 + 2r^2w^4 + 2w^5 + r^4 + w^4 + 5w^2,$\\&  &$ 2r^2w^4 + 2w^6 + 2r^2w^3 + 2w^5 + 2r^2w^2 + 2w^4 + 2w^3 + 2r^2 + 3w^2\}$\\

$K9a5$& $9_{28}$ & $\{2w^6 + 4r^2w^3 + 4rw^3 + 2w^4 + 2r^3 + 3r^2 + 2w^2,$\\&  &$ 2r^3w^4 + 4r^3w^3 + 2r^3w^2 + r^4 + 4w^3 + r^2 + 6w^2\}$\\

$K9a6$& $9_{32}$ & $\{2r^5w^2 + 4r^4w^2 + 4r^3w^2 + 3r^4 + 2rw^2 + 4r^2 + 3w^2,$\\&  &$ 2r^5w + 2r^5 + 2r^4w + 4r^3w^2 + r^4 + 4r^3w + 4r^2w^2 + 4r^2w + 2w^3 + 4r^2 + 3w^2\}$\\

$K9a7$& $9_{24}$ & $\{2r^3w^3 + 2r^3w^2 + 2w^5 + 2r^4 + 2rw^3 + w^4 + 2rw^2 + 4r^2 + 3w^2,$\\&  &$ 2r^4w^4 + 2r^4w^3 + r^6 + w^4 + 2w^3 + r^2 + 3w^2\}$\\

$K9a8$& $9_{8}$ & $\{r^8 + 2r^4w^3 + 2r^4w^2 + 2w^5 + w^4 + 3w^2,$\\&  &$ 2r^4w^4 + 2r^4w^3 + w^4 + 2w^3 + 4r^2 + 3w^2\}$\\

$K9a9$& $9_{36}$ & $\{2r^5w^2 + 2r^4w^2 + 2r^5 + r^4 + 5r^2 + 2w^2,$\\&  &$ 2r^7 + 2r^5w^2 + r^6 + r^4 + 2r^2w^2 + w^4 + 2r^2\}$\\

$K9a10$& $9_{15}$ & $\{2r^5w^2 + r^6 + 2r^4w^2 + 2r^5 + r^4 + 2r^2 + 2w^2,$\\&  &$ 2r^7 + 2r^5w^2 + r^4 + 2r^2w^2 + w^4 + 5r^2\}$\\

$K9a11$& $9_{33}$ & $\{2r^4w^3 + 2r^4w^2 + 2r^3w^3 + 2r^3w^2 + 2r^2w^3 + r^4 + 2rw^3 + w^4 + 2r^3 + 3r^2 + 2w^2,$\\&  &$ 2r^4w^2 + 2r^4w + 6r^2w^3 + 2r^4 + 4r^2w^2 + 2w^4 + 6r^2w + 5r^2 + 4w^2\}$\\

$K9a12$& $9_{27}$ & $\{2r^2w^5 + 2r^3w^3 + 2r^3w^2 + 2r^4 + 2rw^3 + w^4 + 2rw^2 + 4r^2 + 3w^2,$\\&  &$ 2r^2w^4 + 4r^3w^2 + 2r^2w^3 + r^4 + 2r^3w + w^4 + 2w^3 + 2r^2 + 3w^2\}$\\

$K9a13$& $9_{31}$ & $\{2rw^6 + 4r^2w^3 + 4rw^3 + 2w^4 + 2r^3 + 3r^2 + 2w^2,$\\&  &$ 2r^2w^4 + 4r^2w^3 + 6r^2w^2 + 4r^2w + 4w^3 + 3r^2 + 6w^2\}$\\

$K9a14$& $9_{17}$ & $\{2r^4w^2 + 4r^2w^4 + w^6 + 2r^4 + 2w^2,$\\&  &$ 2r^4w^3 + 4r^2w^4 + 2w^5 + 4r^2w + 4r^2 + 5w^2\}$\\

$K9a15$& $9_{26}$ & $\{2r^5w^2 + 4r^3w^2 + 3r^4 + 2r^2w^2 + 2rw^2 + 4r^2 + 3w^2,$\\&  &$ 2r^6w + r^6 + 2r^4w^2 + 2r^4w + 2r^2w^2 + 2r^2w + 2w^3 + 3r^2 + 3w^2\}$\\

$K9a16$& $9_{23}$ & $\{2w^8 + 4w^5 + 2w^4 + 5w^2,$\\&  &$ 8w^5 + 6w^4 + 5w^2\}$\\

$K9a17$& $9_{14}$ & $\{r^8 + 2r^6w^2 + 2r^4w^2 + r^4 + 2r^2w^2 + 3w^2,$\\&  &$ 2r^6w + 2r^4w^2 + 2r^4w + 2r^2w^2 + 2r^2w + 2w^3 + 6r^2 + 3w^2\}$\\

$K9a18$& $9_{37}$ & $\{2r^4w^2 + 4r^2w^4 + 2r^4 + 5w^2,$\\&  &$ 2r^4w^3 + 4r^2w^4 + w^6 + 2w^5 + 4r^2w + 4r^2 + 2w^2\}$\\

$K9a19$& $9_{20}$ & $\{2w^7 + 2r^2w^4 + w^6 + 2r^2w^3 + r^4 + w^4 + 2w^2,$\\&  &$ 2r^2w^5 + 2r^2w^4 + 2r^2w^3 + 2r^2w^2 + 2w^4 + 2w^3 + 2r^2 + 7w^2\}$\\

$K9a20$& $9_{11}$ & $\{2r^6w^2 + 2r^5 + 2r^3w^2 + r^4 + 5r^2 + 2w^2,$\\&  &$ r^8 + 2r^7 + 2r^5w^2 + 2r^2w^2 + w^4 + 3r^2\}$\\

$K9a21$& $9_{21}$ & $\{2r^6w^2 + r^6 + 2r^5 + 2r^3w^2 + r^4 + 2r^2 + 2w^2,$\\&  &$ 2r^6 + 2r^4w^2 + 4r^4 + 4r^2w^2 + w^4 + 7r^2\}$\\

$K9a22$& $9_{12}$ & $\{2w^7 + 2r^2w^4 + 2r^2w^3 + r^4 + w^4 + 5w^2,$\\&  &$ 2r^2w^6 + w^8 + 2r^2w^5 + 2w^3 + 2r^2 + 3w^2\}$\\

$K9a23$& $9_{6}$ & $\{w^{10} + 2w^9 + w^4 + 2w^2,$\\&  &$ 4w^7 + 3w^4 + 7w^2\}$\\

$K9a24$& $9_{18}$ & $\{2w^7 + 2w^6 + 2w^5 + 2w^4 + 5w^2,$\\&  &$ 4w^7 + w^6 + 3w^4 + 4w^2\}$\\

$K9a25$& $9_{16}$ & $\{2w^9 + 2w^6 + w^4 + w^2,$\\&  &$ 2w^6 + 4w^5 + 4w^4 + 2w^3 + 9w^2\}$\\

$K9a26$& $9_{7}$ & $\{2w^9 + w^6 + w^4 + 4w^2,$\\&  &$ w^8 + 2w^7 + 2w^6 + 2w^5 + 5w^2\}$\\

$K9a27$& $9_{2}$ & $\{2w^9 + w^4 + 7w^2,$\\&  &$ w^{14} + 2w^9 + 2w^2\}$\\

$K9a28$& $9_{34}$ & $\{6r^4w^2 + 6r^3w^2 + 2rw^4 + r^4 + 6r^2w^2 + 2r^3 + 4rw^2 + 3r^2 + 4w^2,$\\&  &$ 4r^4w^2 + 4r^4w + 4r^2w^3 + 8r^2w^2 + 2w^4 + 4r^2w + 5r^2 + 4w^2\}$\\

$K9a29$& $9_{41}$ & $\{2r^6 + 6r^4w^2 + 3r^4 + 6r^2w^2 + 3w^2,$\\&  &$ 6r^4w^2 + 6r^4w + 2w^3 + 6r^2 + 3w^2\}$\\

$K9a30$& $9_{38}$ & $\{2w^7 + 2w^6 + 6w^5 + 5w^4 + 7w^2,$\\&  &$ 2w^7 + 2w^6 + 6w^5 + 6w^4 + 5w^2\}$\\

$K9a31$& $9_{29}$ & $\{2r^3w^4 + 2r^2w^4 + 2r^5 + 4r^3w^2 + 4r^2w^2 + 5r^2 + 4w^2,$\\&  &$ 2r^3w^4 + 2r^2w^4 + 2r^5 + 4r^3w^2 + r^4 + 4r^2w^2 + 2w^4 + 3r^2\}$\\

$K9a32$& $9_{39}$ & $\{2r^6 + 4r^4w^2 + 2r^5 + 4r^3w^2 + 2r^4 + 2r^3 + 3r^2 + 2w^2,$\\&  &$ 2r^6 + 6r^4w^2 + 4r^4 + 2r^2w^2 + w^4 + 7r^2\}$\\

$K9a33$& $9_{9}$ & $\{2w^9 + w^8 + w^6 + 2w^2,$\\&  &$ 2w^7 + 2w^6 + 2w^5 + w^4 + 7w^2\}$\\

$K9a34$& $9_{13}$ & $\{2w^8 + 3w^6 + 2w^4 + 6w^2,$\\&  &$ 2w^7 + 3w^6 + 2w^5 + w^4 + 4w^2\}$\\

$K9a35$& $9_{4}$ & $\{2w^9 + w^8 + 5w^2,$\\&  &$ w^{10} + 2w^9 + 4w^2\}$\\

$K9a36$& $9_{5}$ & $\{2w^8 + 2w^6 + 2w^4 + 9w^2,$\\&  &$ w^{10} + 2w^9 + w^6 + w^2\}$\\

$K9a37$& $9_{40}$ & $\{12rw^4 + 12r^2w^2 + 6w^4 + 2r^3 + 12rw^2 + 3r^2 + 6w^2,$\\&  &$ 6r^2w^4 + 12r^2w^3 + 6r^2w^2 + 4w^3 + 3r^2 + 6w^2\}$\\

$K9a38$& $9_{3}$ & $\{w^{12} + 2w^9 + 3w^2,$\\&  &$ 2w^9 + w^6 + 6w^2\}$\\

$K9a39$& $9_{10}$ & $\{7w^6 + 6w^2,$\\&  &$ 2w^9 + 2w^6 + 3w^2\}$\\

$K9a40$& $9_{35}$ & $\{6w^6 + 9w^2,$\\&  &$ 2w^9 + 3w^6\}$\\

$K9a41$& $9_{1}$ & $\{w^{18},$\\&  &$ 2w^9 + 9w^2\}$\\

$K10a1$& $10_{60}$ & $\{2r^6w^2 + 4r^4w^2 + 3r^4 + 6r^2w^2 + 4w^2,$\\&  &$ 2r^4w^2 + 4r^4w + 4r^2w^3 + 4r^2w^2 + 2w^4 + 4r^2w + 6r^2 + 4w^2\}$\\

$K10a2$& $10_{59}$ & $\{2r^4w^3 + 2r^2w^5 + 2r^2w^3 + 2w^5 + 2r^4 + 4r^2w^2 + w^4 + 4w^2,$\\&  &$ 2r^4w^3 + 4r^2w^4 + 2r^2w^3 + 3w^4 + 2r^2w + 4r^2 + 4w^2\}$\\

$K10a3$& $10_{73}$ & $\{2r^6w^2 + 2r^5w^2 + 2r^4w^2 + 2r^3w^2 + 2r^4 + 2r^2w^2 + 2r^3 + 3r^2 + 3w^2,$\\&  &$ 2r^5w + 2r^5 + 2r^4w + 2r^3w^2 + r^4 + 2r^3w + 4r^2w^2 + 4r^2w + 2w^3 + 5r^2 + 3w^2\}$\\

$K10a4$& $10_{72}$ & $\{2r^2w^7 + 2w^7 + w^6 + r^4 + 2r^2w^2 + w^4 + 3w^2,$\\&  &$ 2r^2w^4 + 2r^2w^3 + 4w^5 + 2r^2w^2 + 4w^4 + 2r^2w + 4w^3 + 2r^2 + 8w^2\}$\\

$K10a5$& $10_{36}$ & $\{2r^2w^7 + 2w^7 + r^4 + 2r^2w^2 + w^4 + 6w^2,$\\&  &$ w^8 + 2r^2w^5 + 4w^6 + 2r^2w^3 + 2w^4 + 2r^2w + 2r^2 + 4w^2\}$\\

$K10a6$& $10_{57}$ & $\{4r^3w^3 + 4r^5 + 2r^2w^3 + 3r^4 + w^4 + 5r^2 + w^2,$\\&  &$ 2r^6w^2 + 2r^6w + 2r^5 + 2r^3w^2 + 2r^4 + 2r^3w + 2w^3 + 3r^2 + 3w^2\}$\\

$K10a7$& $10_{81}$ & $\{2r^4w^3 + 2r^4w^2 + 2r^3w^3 + 2r^3w^2 + 2w^5 + r^4 + w^4 + 2r^3 + 3r^2 + 3w^2,$\\&  &$ 2r^3w^4 + 2r^3w^3 + 2r^2w^4 + 2r^5 + 2r^2w^3 + r^4 + w^4 + 2w^3 + 3r^2 + 3w^2\}$\\

$K10a8$& $10_{80}$ & $\{2w^8 + 2w^7 + w^6 + 2w^5 + 2w^4 + 3w^2,$\\&  &$ 2w^7 + 4w^6 + 4w^5 + w^4 + 2w^3 + 8w^2\}$\\

$K10a9$& $10_{55}$ & $\{2w^8 + 2w^7 + 2w^5 + 2w^4 + 6w^2,$\\&  &$ 2w^7 + 5w^6 + 4w^5 + w^4 + 2w^3 + 5w^2\}$\\

$K10a10$& $10_{71}$ & $\{2r^2w^5 + 2r^4w^2 + 2r^3w^2 + 2r^2w^3 + r^4 + 2rw^3 + w^4 + 2r^3 + 3r^2 + 3w^2,$\\&  &$ 2r^5w^2 + 2r^3w^4 + 2r^5w + 2r^3w^3 + r^4 + 2r^2w^2 + w^4 + 2w^3 + 3r^2 + 3w^2\}$\\

$K10a11$& $10_{88}$ & $\{2r^3w^4 + 4r^4w^2 + 2r^2w^4 + 4r^3w^2 + 2rw^4 + r^4 + 4r^2w^2 + 2r^3 + 4rw^2 + 3r^2 + 5w^2,$\\&  &$ 2r^4w^3 + 2r^4w^2 + 4r^2w^4 + 2r^4w + 4r^2w^3 + 4r^2w^2 + w^4 + 4r^2w + 2w^3 + 5r^2 + 3w^2\}$\\

$K10a12$& $10_{97}$ & $\{2r^2w^6 + 6r^2w^4 + 2w^6 + r^4 + 4w^4 + 8w^2,$\\&  &$ 2r^2w^5 + 4w^6 + 6r^2w^3 + 12w^4 + 2r^2w + 2r^2 + 4w^2\}$\\

$K10a13$& $10_{49}$ & $\{3w^8 + 2w^7 + 2w^5 + w^4 + 4w^2,$\\&  &$ 2w^8 + 2w^7 + 4w^5 + 3w^4 + 2w^3 + 8w^2\}$\\

$K10a14$& $10_{53}$ & $\{2w^7 + 2w^6 + 4w^5 + 5w^4 + 8w^2,$\\&  &$ 2w^8 + 2w^7 + w^6 + 4w^5 + 3w^4 + 2w^3 + 5w^2\}$\\

$K10a15$& $10_{47}$ & $\{2r^5w^3 + 2r^7 + 2r^2w^3 + w^4 + 7r^2 + w^2,$\\&  &$ r^{10} + 2r^7w^2 + 2r^7w + r^4 + 2w^3 + 3w^2\}$\\

$K10a16$& $10_{51}$ & $\{2r^5w^3 + 2r^7 + r^6 + 2r^2w^3 + w^4 + 4r^2 + w^2,$\\&  &$ 2r^5w^2 + 2r^5w + 2r^4w^2 + 2r^5 + 2r^4w + 2r^4 + 2w^3 + 3r^2 + 3w^2\}$\\

$K10a17$& $10_{78}$ & $\{2w^6 + 4r^2w^3 + 4w^5 + r^4 + 2r^2w^2 + 2w^4 + 4w^2,$\\&  &$ 2r^2w^6 + 4r^2w^5 + 2r^2w^4 + w^4 + 4w^3 + 2r^2 + 6w^2\}$\\

$K10a18$& $10_{77}$ & $\{2r^3w^3 + 4r^5 + 4r^2w^3 + 3r^4 + w^4 + 5r^2 + w^2,$\\&  &$ 2r^7w^2 + 2r^7w + r^6 + r^4 + 2w^3 + 2r^2 + 3w^2\}$\\

$K10a19$& $10_{34}$ & $\{r^{10} + 2r^5w^3 + 2r^7 + 2r^2w^3 + w^4 + 2r^2 + w^2,$\\&  &$ 2r^7w^2 + 2r^7w + r^4 + 2w^3 + 5r^2 + 3w^2\}$\\

$K10a20$& $10_{58}$ & $\{2r^4w^4 + 4r^2w^4 + 2r^4 + 6w^2,$\\&  &$ 2r^4w^2 + 4r^2w^4 + 2w^6 + 4r^2w^2 + 3w^4 + 4r^2\}$\\

$K10a21$& $10_{89}$ & $\{2r^6w^2 + 2r^5w^2 + 4r^4w^2 + 4r^3w^2 + 2r^4 + 2r^3 + 3r^2 + 3w^2,$\\&  &$ 2r^6 + 10r^4w + 4r^4 + 10r^2w^2 + 10r^2w + 2w^3 + 7r^2 + 3w^2\}$\\

$K10a22$& $10_{70}$ & $\{2r^5w^2 + 2r^2w^4 + 4r^3w^2 + 3r^4 + 2rw^2 + 4r^2 + 4w^2,$\\&  &$ 2r^6w^2 + 2r^4w^4 + r^6 + 2r^2w^2 + 2w^4 + 3r^2\}$\\

$K10a23$& $10_{35}$ & $\{r^8 + 2r^6w^2 + 2r^4w^2 + 2r^2w^4 + r^4 + 4w^2,$\\&  &$ 2r^6w^2 + 2r^4w^4 + 2r^2w^2 + 2w^4 + 6r^2\}$\\

$K10a24$& $10_{96}$ & $\{2r^6w^2 + 6r^4w^2 + 2r^2w^4 + 3r^4 + 4r^2w^2 + 4w^2,$\\&  &$ 2r^6w + 4r^4w^2 + 4r^4w + 2r^2w^3 + 8r^2w^2 + 2w^4 + 2r^2w + 6r^2 + 4w^2\}$\\

$K10a25$& $10_{45}$ & $\{2r^3w^4 + 2r^4w^2 + 2r^3w^2 + 2rw^4 + r^4 + 6r^2w^2 + 2r^3 + 4rw^2 + 3r^2 + 5w^2,$\\&  &$ 2r^4w^3 + 2r^2w^4 + 2r^4w + 2r^2w^3 + 6r^2w^2 + w^4 + 4r^2w + 2w^3 + 5r^2 + 3w^2\}$\\

$K10a26$& $10_{39}$ & $\{2r^2w^5 + 2w^7 + 2r^2w^4 + w^6 + r^4 + w^4 + 3w^2,$\\&  &$ 4r^2w^5 + 4w^6 + 3w^4 + 2r^2w + 2r^2 + 6w^2\}$\\

$K10a27$& $10_{75}$ & $\{6r^4w^2 + 3r^4 + 6r^2w^2 + 4w^2,$\\&  &$ 2r^6w + 6r^4w^2 + 6r^2w^3 + 2w^4 + 6r^2w + 6r^2 + 4w^2\}$\\

$K10a28$& $10_{56}$ & $\{2r^2w^7 + 2r^2w^4 + w^6 + 2w^5 + r^4 + w^4 + 3w^2,$\\&  &$ 4w^6 + 6r^2w^3 + 3w^4 + 2r^2 + 6w^2\}$\\

$K10a29$& $10_{38}$ & $\{2r^2w^7 + 2r^2w^4 + 2w^5 + r^4 + w^4 + 6w^2,$\\&  &$ 5w^6 + 6r^2w^3 + 3w^4 + 2r^2 + 3w^2\}$\\

$K10a30$& $10_{40}$ & $\{4r^4w^3 + 4r^5 + 3r^4 + 2rw^3 + w^4 + 5r^2 + w^2,$\\&  &$ 2r^5w^2 + 2r^6 + 2r^5w + 2r^3w^2 + 2r^4 + 2r^3w + 2w^3 + 3r^2 + 3w^2\}$\\

$K10a31$& $10_{42}$ & $\{2r^3w^5 + 2r^4w^2 + 2r^3w^2 + 2r^2w^3 + r^4 + 2rw^3 + w^4 + 2r^3 + 3r^2 + 3w^2,$\\&  &$ 2r^4w^2 + 2r^2w^4 + 2r^4w + 2r^2w^3 + 2r^4 + 6r^2w^2 + w^4 + 2r^2w + 2w^3 + 5r^2 + 3w^2\}$\\

$K10a32$& $10_{44}$ & $\{2r^2w^5 + 2r^4w^2 + 2r^2w^3 + 2w^5 + 2r^4 + 4r^2w^2 + w^4 + 4w^2,$\\&  &$ 2r^4w^3 + 2r^4w^2 + 2r^2w^4 + 4r^2w^3 + 4r^2w^2 + 2r^2w + 4w^3 + 4r^2 + 6w^2\}$\\

$K10a33$& $10_{14}$ & $\{2r^2w^7 + w^8 + 2w^7 + r^4 + 2r^2w^2 + 4w^2,$\\&  &$ 2r^2w^5 + 4w^6 + 2r^2w^3 + 3w^4 + 2r^2w + 2r^2 + 6w^2\}$\\

$K10a34$& $10_{30}$ & $\{2r^2w^6 + 2r^2w^4 + 2w^6 + r^4 + 2r^2w^2 + 4w^4 + 8w^2,$\\&  &$ 2r^2w^5 + 5w^6 + 2r^2w^3 + 3w^4 + 2r^2w + 2r^2 + 3w^2\}$\\

$K10a35$& $10_{41}$ & $\{2r^4w^2 + 2r^2w^4 + 4r^2w^3 + 2w^5 + 2r^4 + 2r^2w^2 + w^4 + 4w^2,$\\&  &$ 2r^4w^4 + 2r^2w^5 + 4r^2w^3 + 3w^4 + 2r^2w + 4r^2 + 4w^2\}$\\

$K10a36$& $10_{113}$ & $\{6r^2w^5 + 6rw^5 + 2rw^4 + 2w^5 + 4r^2w^2 + w^4 + 2r^3 + 4rw^2 + 3r^2 + 5w^2,$\\&  &$ 4r^2w^4 + 12r^2w^3 + 8r^2w^2 + 6w^4 + 4r^2w + 6w^3 + 3r^2 + 7w^2\}$\\

$K10a37$& $10_{67}$ & $\{2r^2w^5 + 2w^7 + 2r^2w^4 + r^4 + w^4 + 6w^2,$\\&  &$ 4r^2w^5 + 5w^6 + 3w^4 + 2r^2w + 2r^2 + 3w^2\}$\\

$K10a38$& $10_{69}$ & $\{6r^4w^2 + 4r^3w^2 + 2r^4 + 2r^3 + 3r^2 + 3w^2,$\\&  &$ 2r^7 + 4r^5w + 2r^4w + 2r^3w^2 + r^4 + 2r^3w + 4r^2w^2 + 4r^2w + 2w^3 + 5r^2 + 3w^2\}$\\

$K10a39$& $10_{87}$ & $\{2r^3w^5 + 4r^2w^5 + 2rw^5 + 2r^3w^2 + 2r^4 + w^4 + 2rw^2 + 4r^2 + 4w^2,$\\&  &$ 4r^3w^3 + 6r^3w^2 + 4r^2w^3 + r^4 + 2r^3w + 7w^4 + 2r^2w + 2r^2 + 4w^2\}$\\

$K10a40$& $10_{66}$ & $\{4w^7 + 3w^6 + 2w^4 + 3w^2,$\\&  &$ 2w^7 + 4w^6 + 4w^5 + 6w^4 + 6w^3 + 10w^2\}$\\

$K10a41$& $10_{62}$ & $\{2r^7 + 2r^4w^3 + 2r^3w^3 + w^4 + 7r^2 + w^2,$\\&  &$ 2r^7w^2 + r^8 + 2r^7w + r^6 + 2w^3 + 3w^2\}$\\

$K10a42$& $10_{65}$ & $\{2r^7 + 2r^4w^3 + r^6 + 2r^3w^3 + w^4 + 4r^2 + w^2,$\\&  &$ 2r^6w^2 + 2r^6w + r^6 + 2r^4w^2 + 2r^4w + 2r^4 + 2w^3 + 4r^2 + 3w^2\}$\\

$K10a43$& $10_{12}$ & $\{2r^7 + 2r^4w^3 + r^6 + 2r^3w^3 + w^4 + 4r^2 + w^2,$\\&  &$ 2r^7w^2 + r^8 + 2r^7w + 2w^3 + 3r^2 + 3w^2\}$\\

$K10a44$& $10_{28}$ & $\{2r^7 + 2r^4w^3 + 2r^6 + 2r^3w^3 + w^4 + r^2 + w^2,$\\&  &$ 2r^6w^2 + 2r^6w + 2r^4w^2 + 2r^4w + 2r^4 + 2w^3 + 7r^2 + 3w^2\}$\\

$K10a45$& $10_{101}$ & $\{6w^7 + 2w^6 + 2w^5 + 5w^4 + 8w^2,$\\&  &$ 6w^6 + 10w^5 + 6w^4 + 4w^3 + 6w^2\}$\\

$K10a46$& $10_{92}$ & $\{2r^2w^6 + 2r^2w^5 + 2w^6 + 2r^2w^3 + 4w^5 + r^4 + 2r^2w^2 + 2w^4 + 4w^2,$\\&  &$ 2r^2w^5 + 4r^2w^4 + 4r^2w^3 + 4w^5 + 2r^2w^2 + 4w^4 + 4w^3 + 2r^2 + 8w^2\}$\\

$K10a47$& $10_{95}$ & $\{2r^5w^3 + 2r^4w^3 + 2r^3w^3 + 4r^5 + 2r^2w^3 + 3r^4 + w^4 + 5r^2 + w^2,$\\&  &$ 2r^5w^2 + 2r^5w + 2r^4w^2 + 4r^5 + 2r^4w + 2r^3w^2 + 3r^4 + 2r^3w + 2r^2w^2 + 2r^2w + 2w^3 + 5r^2 + 3w^2\}$\\

$K10a48$& $10_{54}$ & $\{2r^3w^5 + 2r^2w^5 + 2r^5 + w^4 + 5r^2 + 3w^2,$\\&  &$ 2r^5w^3 + 2r^5w^2 + r^6 + w^6 + 2w^5 + r^4 + 2w^2\}$\\

$K10a49$& $10_{37}$ & $\{2r^3w^5 + 2r^2w^5 + r^6 + 2r^5 + w^4 + 2r^2 + 3w^2,$\\&  &$ 2r^5w^3 + 2r^5w^2 + w^6 + 2w^5 + r^4 + 3r^2 + 2w^2\}$\\

$K10a50$& $10_{84}$ & $\{2r^2w^6 + 2r^2w^5 + 2rw^6 + 2rw^5 + 2r^2w^3 + 2rw^3 + 2w^4 + 2r^3 + 3r^2 + 3w^2,$\\&  &$ 4r^2w^4 + 6r^2w^3 + 4w^5 + 6r^2w^2 + 3w^4 + 2r^2w + 3r^2 + 5w^2\}$\\

$K10a51$& $10_{63}$ & $\{4w^7 + 2w^6 + 2w^4 + 6w^2,$\\&  &$ 3w^8 + 4w^7 + 2w^6 + 4w^3 + 6w^2\}$\\

$K10a52$& $10_{43}$ & $\{2r^2w^5 + 2r^4w^2 + 2rw^5 + 2r^3w^2 + 2r^2w^3 + r^4 + w^4 + 2r^3 + 3r^2 + 3w^2,$\\&  &$ 2r^5w^2 + 2r^5w + 2r^2w^4 + 2r^3w^2 + 2r^2w^3 + r^4 + w^4 + 2w^3 + 3r^2 + 3w^2\}$\\

$K10a53$& $10_{29}$ & $\{2r^2w^6 + 2r^4w^2 + 2r^2w^4 + w^6 + 2r^4 + 3w^2,$\\&  &$ 2r^4w^3 + 2r^2w^5 + 2r^4w + 2r^2w^3 + 2r^2w^2 + 3w^4 + 4r^2 + 4w^2\}$\\

$K10a54$& $10_{13}$ & $\{2r^2w^6 + 2r^4w^2 + 2r^2w^4 + 2r^4 + 6w^2,$\\&  &$ 2r^4w^4 + 2r^2w^6 + w^8 + 2r^2w^2 + w^4 + 4r^2\}$\\

$K10a55$& $10_{32}$ & $\{2r^3w^5 + 2rw^5 + 2r^3w^2 + 2w^5 + 2r^4 + w^4 + 2rw^2 + 4r^2 + 4w^2,$\\&  &$ 4r^4w^3 + r^6 + 2r^4w + 7w^4 + r^2 + 4w^2\}$\\

$K10a56$& $10_{5}$ & $\{2r^6w^3 + 2r^7 + 2rw^3 + w^4 + 7r^2 + w^2,$\\&  &$ r^{12} + 2r^7w^2 + 2r^7w + 2w^3 + r^2 + 3w^2\}$\\

$K10a57$& $10_{23}$ & $\{2r^6w^3 + 2r^7 + r^6 + 2rw^3 + w^4 + 4r^2 + w^2,$\\&  &$ 3r^6 + 4r^4w^2 + 4r^4w + 2w^3 + 4r^2 + 3w^2\}$\\

$K10a58$& $10_{27}$ & $\{2r^4w^3 + 2r^3w^3 + 4r^5 + 3r^4 + 2rw^3 + w^4 + 5r^2 + w^2,$\\&  &$ 2r^6w^2 + 2r^6w + 3r^6 + 2r^2w^2 + 2r^2w + 2w^3 + 4r^2 + 3w^2\}$\\

$K10a59$& $10_{2}$ & $\{w^{14} + 2r^2w^8 + r^4 + w^2,$\\&  &$ 2r^2w^7 + 2w^8 + 2r^2w + 2r^2 + 8w^2\}$\\

$K10a60$& $10_{21}$ & $\{w^8 + 2r^2w^5 + 2w^7 + 2r^2w^4 + r^4 + 4w^2,$\\&  &$ 2r^2w^7 + 2w^8 + w^6 + 2r^2w + 2r^2 + 5w^2\}$\\

$K10a61$& $10_{25}$ & $\{2r^2w^6 + 2w^7 + w^6 + 2r^2w^3 + r^4 + w^4 + 3w^2,$\\&  &$ 2r^2w^5 + 2r^2w^4 + 2w^6 + 4w^5 + w^4 + 2r^2w + 2r^2 + 6w^2\}$\\

$K10a62$& $10_{74}$ & $\{4r^2w^4 + 2w^6 + r^4 + 2r^2w^2 + 4w^4 + 8w^2,$\\&  &$ 2r^2w^7 + 2w^8 + 2w^6 + 2r^2w + 2r^2 + 2w^2\}$\\

$K10a63$& $10_{18}$ & $\{2r^4w^5 + r^8 + 2r^4w^2 + 2w^5 + w^4 + 4w^2,$\\&  &$ 4r^4w^3 + 2r^4w + 7w^4 + 4r^2 + 4w^2\}$\\

$K10a64$& $10_{10}$ & $\{r^{10} + 2r^6w^3 + 2r^7 + 2rw^3 + w^4 + 2r^2 + w^2,$\\&  &$ 2r^6w^2 + 2r^6w + 2r^6 + 2r^2w^2 + 2r^2w + 2w^3 + 7r^2 + 3w^2\}$\\

$K10a65$& $10_{7}$ & $\{2r^2w^6 + 2w^7 + 2r^2w^3 + r^4 + w^4 + 6w^2,$\\&  &$ w^{10} + 2r^2w^7 + 2w^8 + 2r^2w + 2r^2 + 3w^2\}$\\

$K10a66$& $10_{107}$ & $\{2r^4w^3 + 2r^2w^5 + 2r^4w^2 + 2r^3w^3 + 2rw^5 + 2r^3w^2 + r^4 + w^4 + 2r^3 + 3r^2 + 3w^2,$\\&  &$ 4r^4w^2 + 4r^2w^4 + 2r^4w + 4r^2w^3 + 2r^4 + 4r^2w^2 + w^4 + 2r^2w + 2w^3 + 5r^2 + 3w^2\}$\\

$K10a67$& $10_{68}$ & $\{2r^6w^3 + 2r^7 + 2r^6 + 2rw^3 + w^4 + r^2 + w^2,$\\&  &$ 2r^6 + 4r^4w^2 + 4r^4w + 2w^3 + 7r^2 + 3w^2\}$\\

$K10a68$& $10_{15}$ & $\{2r^4w^5 + 2rw^5 + 2r^5 + w^4 + 5r^2 + 3w^2,$\\&  &$ r^8 + 2r^5w^3 + 2r^5w^2 + w^6 + 2w^5 + r^2 + 2w^2\}$\\

$K10a69$& $10_{31}$ & $\{2r^4w^5 + r^6 + 2rw^5 + 2r^5 + w^4 + 2r^2 + 3w^2,$\\&  &$ 2r^4w^3 + 2r^4w^2 + w^6 + 2r^2w^3 + 2w^5 + 2r^4 + 2r^2w^2 + 5r^2 + 2w^2\}$\\

$K10a70$& $10_{6}$ & $\{2r^2w^8 + w^{10} + r^4 + 3w^2,$\\&  &$ 2w^8 + 2r^2w^5 + w^6 + 2r^2w^3 + 2r^2 + 5w^2\}$\\

$K10a71$& $10_{24}$ & $\{2r^2w^6 + 2r^2w^5 + 2w^5 + r^4 + w^4 + 6w^2,$\\&  &$ 2w^8 + 2r^2w^5 + 2w^6 + 2r^2w^3 + 2r^2 + 2w^2\}$\\

$K10a72$& $10_{105}$ & $\{2r^4w^3 + 4r^2w^5 + 2r^4w^2 + 2w^5 + 2r^4 + 4r^2w^2 + w^4 + 4w^2,$\\&  &$ 2r^4w^2 + 4r^2w^4 + 2r^4w + 8r^2w^3 + 4r^2w^2 + 4w^3 + 4r^2 + 6w^2\}$\\

$K10a73$& $10_{76}$ & $\{2r^2w^8 + 2w^6 + r^4 + 2w^2,$\\&  &$ 2w^6 + 4r^2w^3 + 4w^5 + 2r^2w^2 + w^4 + 2r^2 + 6w^2\}$\\

$K10a74$& $10_{20}$ & $\{2r^2w^8 + w^6 + r^4 + 5w^2,$\\&  &$ w^{10} + 2w^8 + 2r^2w^5 + 2r^2w^3 + 2r^2 + 3w^2\}$\\

$K10a75$& $10_{1}$ & $\{2r^2w^8 + r^4 + 8w^2,$\\&  &$ w^{16} + 2r^2w^8 + 2r^2\}$\\

$K10a76$& $10_{112}$ & $\{6r^5w^2 + 2r^6 + 4r^4w^2 + 4r^3w^2 + 4r^2w^2 + 6rw^2 + 6r^2 + 4w^2,$\\&  &$ 4r^4w^3 + r^6 + 4r^4w^2 + 4r^4w + 4r^2w^3 + 4r^2w^2 + 2w^4 + 4r^2w + 3r^2 + 4w^2\}$\\

$K10a77$& $10_{114}$ & $\{6r^5w^2 + 3r^6 + 4r^4w^2 + 4r^3w^2 + 4r^2w^2 + 6rw^2 + 3r^2 + 4w^2,$\\&  &$ 4r^4w^3 + 4r^4w^2 + 4r^4w + 4r^2w^3 + 4r^2w^2 + 2w^4 + 4r^2w + 6r^2 + 4w^2\}$\\

$K10a78$& $10_{79}$ & $\{2r^3w^5 + 2r^2w^5 + w^6 + 2r^5 + w^4 + 5r^2,$\\&  &$ 2r^5w^3 + 2r^5w^2 + r^6 + 2w^5 + r^4 + 5w^2\}$\\

$K10a79$& $10_{48}$ & $\{2r^4w^5 + 2rw^5 + w^6 + 2r^5 + w^4 + 5r^2,$\\&  &$ r^8 + 2r^5w^3 + 2r^5w^2 + 2w^5 + r^2 + 5w^2\}$\\

$K10a80$& $10_{52}$ & $\{2r^4w^5 + r^6 + 2rw^5 + w^6 + 2r^5 + w^4 + 2r^2,$\\&  &$ 2r^4w^3 + 2r^4w^2 + 2r^2w^3 + 2w^5 + 2r^4 + 2r^2w^2 + 5r^2 + 5w^2\}$\\

$K10a81$& $10_{46}$ & $\{2r^2w^8 + w^{10} + w^6 + r^4,$\\&  &$ 2w^8 + 2r^2w^5 + 2r^2w^3 + 2r^2 + 8w^2\}$\\

$K10a82$& $10_{50}$ & $\{2r^2w^6 + 2r^2w^5 + w^6 + 2w^5 + r^4 + w^4 + 3w^2,$\\&  &$ 2w^8 + 2r^2w^5 + w^6 + 2r^2w^3 + 2r^2 + 5w^2\}$\\

$K10a83$& $10_{82}$ & $\{2r^3w^5 + w^8 + 4r^2w^5 + 2rw^5 + 2r^3w^2 + 2r^4 + 2rw^2 + 4r^2 + 2w^2,$\\&  &$ 2r^3w^5 + 2r^3w^4 + 2r^2w^5 + 2w^6 + 2r^3w^2 + r^4 + 2r^3w + 2r^2w + 2r^2 + 6w^2\}$\\

$K10a84$& $10_{83}$ & $\{2r^3w^4 + 4r^2w^4 + 4r^3w^2 + 2rw^4 + 2r^4 + 4r^2w^2 + 2w^4 + 4rw^2 + 4r^2 + 6w^2,$\\&  &$ 2r^3w^5 + 2r^3w^4 + 2r^2w^5 + 3w^6 + 2r^3w^2 + r^4 + 2r^3w + 2r^2w + 2r^2 + 3w^2\}$\\

$K10a85$& $10_{119}$ & $\{4r^4w^3 + 4r^3w^3 + 4r^3w^2 + 4r^2w^3 + 7r^4 + 2rw^3 + w^4 + 2rw^2 + 4r^2 + 2w^2,$\\&  &$ 4r^4w^3 + 4r^4w^2 + 6r^4w + 2r^2w^3 + 2r^4 + 4r^2w^2 + 2w^4 + 2r^2w + 6r^2 + 4w^2\}$\\

$K10a86$& $10_{85}$ & $\{2r^6w^2 + 2r^7 + 2r^5w^2 + 2r^4w^2 + 2r^3w^2 + 2r^2w^2 + 2rw^2 + 7r^2 + 3w^2,$\\&  &$ r^8 + 2r^6w^2 + 2r^6w + 2r^5w^2 + 2r^5w + 2r^3w^2 + r^4 + 2r^3w + 2w^3 + r^2 + 3w^2\}$\\

$K10a87$& $10_{86}$ & $\{2r^6w^2 + 2r^7 + 2r^5w^2 + r^6 + 2r^4w^2 + 2r^3w^2 + 2r^2w^2 + 2rw^2 + 4r^2 + 3w^2,$\\&  &$ 2r^5w^2 + 2r^5w + 2r^4w^2 + 2r^4w + 4r^3w^2 + 3r^4 + 4r^3w + 2r^2w^2 + 2r^2w + 2w^3 + 5r^2 + 3w^2\}$\\

$K10a88$& $10_{118}$ & $\{4r^4w^3 + 2r^4w^2 + 2r^3w^3 + 2r^5 + 4r^3w^2 + 2r^2w^3 + 4r^2w^2 + 4rw^3 + w^4 + 2rw^2 + 5r^2 + 3w^2,$\\&  &$ 4r^3w^4 + 2r^3w^3 + 2r^2w^4 + 2r^3w^2 + 4r^2w^3 + 2w^5 + r^4 + 4r^3w + 4r^2w^2 + 2r^2w + 3r^2 + 5w^2\}$\\

$K10a89$& $10_{122}$ & $\{4r^4w^2 + 16r^3w^2 + 7r^4 + 10r^2w^2 + 8rw^2 + 4r^2 + 4w^2,$\\&  &$ 4r^4w^3 + 10r^4w^2 + 4r^4w + 4r^2w^3 + 2w^4 + 4r^2w + 6r^2 + 4w^2\}$\\

$K10a90$& $10_{121}$ & $\{8r^2w^4 + 12rw^4 + 8r^2w^2 + 6w^4 + 2r^3 + 8rw^2 + 3r^2 + 7w^2,$\\&  &$ 4r^2w^5 + 8r^2w^4 + 4r^2w^3 + 4w^5 + 4r^2w^2 + 3w^4 + 2r^2w + 3r^2 + 5w^2\}$\\

$K10a91$& $10_{94}$ & $\{2r^3w^5 + 2r^3w^4 + 2rw^5 + w^6 + 4r^2w^3 + 2rw^4 + 2r^4 + w^4 + 4r^2 + w^2,$\\&  &$ 2r^3w^5 + 2r^3w^4 + 2w^6 + 2r^3w^2 + 4r^2w^3 + r^4 + 2r^3w + 2r^2 + 6w^2\}$\\

$K10a92$& $10_{90}$ & $\{2r^3w^5 + 2r^3w^4 + 2rw^5 + 4r^2w^3 + 2rw^4 + 2r^4 + w^4 + 4r^2 + 4w^2,$\\&  &$ 2r^3w^5 + 2r^3w^4 + 3w^6 + 2r^3w^2 + 4r^2w^3 + r^4 + 2r^3w + 2r^2 + 3w^2\}$\\

$K10a93$& $10_{109}$ & $\{2r^4w^3 + 2r^3w^4 + 2r^3w^3 + 2r^2w^4 + 2r^5 + 2r^2w^3 + 2rw^3 + 2w^4 + 5r^2 + w^2,$\\&  &$ 2r^4w^3 + 2r^3w^4 + 2r^4w^2 + 2r^3w^3 + 2r^3w^2 + 2w^5 + 2r^4 + 2r^3w + r^2 + 5w^2\}$\\

$K10a94$& $10_{115}$ & $\{2r^3w^4 + 6r^4w^2 + 2r^2w^4 + 6r^3w^2 + 2rw^4 + r^4 + 2r^2w^2 + 2w^4 + 2r^3 + 2rw^2 + 3r^2 + 5w^2,$\\&  &$ 2r^4w^3 + 2r^4w^2 + 6r^2w^4 + 2r^4w + 6r^2w^3 + 2r^4 + 2r^2w^2 + w^4 + 2r^2w + 2w^3 + 5r^2 + 3w^2\}$\\

$K10a95$& $10_{106}$ & $\{2r^3w^4 + 4r^2w^5 + 2r^3w^3 + w^6 + 2rw^4 + 2r^4 + 2rw^3 + w^4 + 4r^2 + w^2,$\\&  &$ 2r^3w^4 + 2r^2w^5 + 4r^3w^3 + 2w^6 + 2r^3w^2 + r^4 + 2r^2w + 2r^2 + 6w^2\}$\\

$K10a96$& $10_{98}$ & $\{2r^2w^6 + 2r^2w^4 + 2w^6 + 4r^2w^3 + 4w^5 + r^4 + 2w^4 + 4w^2,$\\&  &$ 2r^2w^6 + 2r^2w^4 + 2w^6 + 4r^2w^3 + 4w^5 + w^4 + 2r^2 + 6w^2\}$\\

$K10a97$& $10_{102}$ & $\{2r^3w^4 + 4r^2w^5 + 2r^3w^3 + 2rw^4 + 2r^4 + 2rw^3 + w^4 + 4r^2 + 4w^2,$\\&  &$ 2r^3w^4 + 2r^2w^5 + 4r^3w^3 + 3w^6 + 2r^3w^2 + r^4 + 2r^2w + 2r^2 + 3w^2\}$\\

$K10a98$& $10_{111}$ & $\{4r^2w^5 + 2r^2w^4 + 2w^6 + 4w^5 + r^4 + 2r^2w^2 + 2w^4 + 4w^2,$\\&  &$ 4r^2w^5 + 4w^6 + 4r^2w^3 + 3w^4 + 2r^2 + 6w^2\}$\\

$K10a99$& $10_{117}$ & $\{2r^5w^2 + 4r^4w^2 + 4r^5 + 8r^3w^2 + 3r^4 + 4r^2w^2 + 2rw^2 + 5r^2 + 3w^2,$\\&  &$ 4r^5w^2 + 6r^5w + 2r^4w^2 + 2r^4w + 2r^3w^2 + 3r^4 + 2r^3w + 2r^2w^2 + 2r^2w + 2w^3 + 5r^2 + 3w^2\}$\\

$K10a100$& $10_{110}$ & $\{2r^4w^3 + 2r^2w^5 + 2r^4w^2 + 2r^2w^4 + 2r^2w^3 + 2w^5 + 2r^4 + 2r^2w^2 + w^4 + 4w^2,$\\&  &$ 2r^4w^3 + 2r^2w^5 + 4r^2w^4 + 2r^4w + 2r^2w^3 + 3w^4 + 4r^2 + 4w^2\}$\\

$K10a101$& $10_{93}$ & $\{2r^4w^4 + 2r^3w^4 + 2r^2w^4 + 2r^5 + 2r^3w^2 + 2rw^4 + 2r^2w^2 + 5r^2 + 5w^2,$\\&  &$ 2r^4w^4 + 2r^3w^4 + 2r^3w^3 + w^6 + 2r^4w + 2r^3w^2 + 2w^5 + 2r^4 + 2r^3w + r^2 + 2w^2\}$\\

$K10a102$& $10_{120}$ & $\{12w^6 + 14w^4 + 10w^2,$\\&  &$ 4w^7 + 6w^6 + 8w^5 + 6w^4 + 4w^3 + 6w^2\}$\\

$K10a103$& $10_{99}$ & $\{2r^4w^4 + 4r^3w^3 + 2r^5 + 4r^2w^3 + 2rw^4 + 2w^4 + 5r^2 + w^2,$\\&  &$ 2r^4w^4 + 4r^3w^3 + 2r^4w + 4r^3w^2 + 2w^5 + 2r^4 + r^2 + 5w^2\}$\\

$K10a104$& $10_{100}$ & $\{2r^7 + 4r^5w^2 + 2r^4w^2 + 2r^3w^2 + 4r^2w^2 + 7r^2 + 3w^2,$\\&  &$ 4r^5w^2 + r^6 + 4r^5w + 2r^4w^2 + 2r^4w + 2r^4 + 2w^3 + 3w^2\}$\\

$K10a105$& $10_{103}$ & $\{2r^7 + 4r^5w^2 + r^6 + 2r^4w^2 + 2r^3w^2 + 4r^2w^2 + 4r^2 + 3w^2,$\\&  &$ 4r^5w^2 + 4r^5w + 2r^4w^2 + 2r^4w + 2r^4 + 2w^3 + 3r^2 + 3w^2\}$\\

$K10a106$& $10_{91}$ & $\{2r^4w^4 + 2r^4w^3 + 2r^3w^3 + 2r^5 + 2r^2w^3 + 2rw^4 + 2rw^3 + 2w^4 + 5r^2 + w^2,$\\&  &$ 2r^4w^4 + 2r^4w^3 + r^6 + 2r^4w^2 + 2r^4w + 2r^2w^3 + 2w^5 + 2r^2w^2 + 2r^2 + 5w^2\}$\\

$K10a107$& $10_{17}$ & $\{2r^4w^5 + w^8 + 2rw^5 + 2r^5 + 5r^2 + w^2,$\\&  &$ 2r^5w^4 + r^8 + 2r^5w + 2w^5 + r^2 + 5w^2\}$\\

$K10a108$& $10_{19}$ & $\{2r^4w^5 + w^8 + r^6 + 2rw^5 + 2r^5 + 2r^2 + w^2,$\\&  &$ 2r^4w^4 + 2r^2w^4 + 2r^4w + 2w^5 + 2r^4 + 2r^2w + 5r^2 + 5w^2\}$\\

$K10a109$& $10_{33}$ & $\{2r^4w^4 + r^6 + 2r^4w^2 + 2r^5 + 2rw^4 + 2w^4 + 2rw^2 + 2r^2 + 5w^2,$\\&  &$ 2r^4w^4 + 2r^2w^4 + w^6 + 2r^4w + 2w^5 + 2r^4 + 2r^2w + 5r^2 + 2w^2\}$\\

$K10a110$& $10_{9}$ & $\{w^{10} + 2r^3w^6 + 2rw^6 + 2r^4 + 4r^2 + w^2,$\\&  &$ 2r^4w^5 + r^6 + 2w^6 + 2r^4w + r^2 + 6w^2\}$\\

$K10a111$& $10_{26}$ & $\{2r^3w^4 + 2r^3w^3 + 2rw^4 + 2w^5 + 2r^4 + 2rw^3 + w^4 + 4r^2 + 4w^2,$\\&  &$ 2r^4w^5 + r^6 + 3w^6 + 2r^4w + r^2 + 3w^2\}$\\

$K10a112$& $10_{22}$ & $\{2r^3w^6 + 2rw^6 + w^6 + 2r^4 + 4r^2 + 3w^2,$\\&  &$ 4r^4w^3 + r^6 + 3w^6 + r^2 + 3w^2\}$\\

$K10a113$& $10_{4}$ & $\{2r^3w^6 + 2rw^6 + 2r^4 + 4r^2 + 6w^2,$\\&  &$ w^{12} + 2r^4w^6 + r^6 + r^2\}$\\

$K10a114$& $10_{8}$ & $\{2r^4w^6 + w^{10} + r^8 + w^2,$\\&  &$ 2r^4w^5 + 2w^6 + 2r^4w + 4r^2 + 6w^2\}$\\

$K10a115$& $10_{16}$ & $\{r^8 + 2r^4w^4 + 2r^4w^3 + 2w^5 + w^4 + 4w^2,$\\&  &$ 2r^4w^5 + 3w^6 + 2r^4w + 4r^2 + 3w^2\}$\\

$K10a116$& $10_{11}$ & $\{2r^4w^6 + r^8 + w^6 + 3w^2,$\\&  &$ 4r^4w^3 + 3w^6 + 4r^2 + 3w^2\}$\\

$K10a117$& $10_{3}$ & $\{2r^4w^6 + r^8 + 6w^2,$\\&  &$ w^{12} + 2r^4w^6 + 4r^2\}$\\

$K10a118$& $10_{104}$ & $\{4r^4w^3 + 2r^3w^4 + 2r^2w^4 + 2r^5 + 4rw^3 + 2w^4 + 5r^2 + w^2,$\\&  &$ 4r^4w^3 + r^6 + 4r^4w^2 + 2r^2w^4 + 2w^5 + 2r^2w + 2r^2 + 5w^2\}$\\

$K10a119$& $10_{108}$ & $\{4r^4w^3 + 2r^3w^4 + r^6 + 2r^2w^4 + 2r^5 + 4rw^3 + 2w^4 + 2r^2 + w^2,$\\&  &$ 4r^4w^3 + 4r^4w^2 + 2r^2w^4 + 2w^5 + 2r^2w + 5r^2 + 5w^2\}$\\

$K10a120$& $10_{116}$ & $\{4r^5w^2 + 2r^6 + 6r^4w^2 + 4r^3w^2 + 6r^2w^2 + 4rw^2 + 6r^2 + 4w^2,$\\&  &$ 2r^4w^3 + 4r^3w^3 + 2r^4w + 8r^3w^2 + 2r^2w^3 + 2r^4 + 4r^3w + 2w^4 + 2r^2w + 2r^2 + 4w^2\}$\\

$K10a121$& $10_{123}$ & $\{10r^4w^2 + 2r^5 + 10r^3w^2 + 10r^2w^2 + 10rw^2 + 5r^2 + 5w^2,$\\&  &$ 10r^2w^4 + 10r^2w^3 + 2w^5 + 10r^2w^2 + 10r^2w + 5r^2 + 5w^2\}$\\

$K10a122$& $10_{64}$ & $\{2r^3w^6 + 2rw^6 + 2w^6 + 2r^4 + 4r^2,$\\&  &$ 4r^4w^3 + r^6 + 2w^6 + r^2 + 6w^2\}$\\

$K10a123$& $10_{61}$ & $\{2r^4w^6 + r^8 + 2w^6,$\\&  &$ 4r^4w^3 + 2w^6 + 4r^2 + 6w^2\}$\\

			\hline
		\end{longtable}
	\end{center}
	
\end{scriptsize}


\begin{thebibliography}{99}
	
	\bibitem{DiaPha21} W. Diao and V. Pham, Writhe-like invariants of alternating links, \emph{J. Knot Theory Ramifications} 30 (2021), 2150004.
	
	\bibitem{HOMFLY85} P. Freyd, D. Yetter, J. Hoste, W. B. R. Lickorish, K. Millett and A. Ocneanu, A new polynomial invariant of
	knots and links, \emph{Bull. AMS} 12 (1985), 239–-246.
	
	\bibitem{Kau89} L.H. Kauffman, A Tutte polynomial for signed graphs, \emph{Discrete Applied Mathematics} 25 (1989), 105--127.
	
	\bibitem{Kau90} L.H. Kauffman, An invariant of regular isotopy, \emph{Transactions of the AMS} 318 (1990), 417-–471.
	
	\bibitem{LivMoo23} C. Livingston and A.H. Moore, LinkInfo: Table of Link Invariants, \\\url{https://linkinfo.sitehost.iu.edu}, (July 2023).
	
	\bibitem{MenThi93} W. Menasco and M. Thistlethwaite, The classification of alternating links, \emph{Annals of Mathematics} (1993), 113--171.
	
	\bibitem{PrzTra88} J. Przytycki and P. Traczyk, Invariants of links of the Conway type, \emph{Kobe J. Math.} 4 (1988), 115–-139.
	
	
	\bibitem{SilWil19} D.S. Silver and S.G. Williams, Knot invariants from Laplacian matrices, \emph{J. Knot Theory Ramifications} 28 (2019), 1950058.
	
\end{thebibliography}
\end{document}